\documentclass[11pt, oneside]{article}   	
\usepackage{geometry}                		
\geometry{letterpaper}                   		
\usepackage{graphicx}				
\usepackage{amssymb}
\usepackage{amsmath}
\usepackage{amsthm}
\usepackage{harpoon}
\usepackage{accents}

\usepackage{tikz}  
\usepackage{float}
\restylefloat{table}
\usepackage{mathrsfs}
\usepackage{verbatim}
\usepackage{lmodern}

\usetikzlibrary{positioning,chains,fit,shapes,calc}  

\newtheorem{theorem}{Theorem}
\newtheorem{lemma}{Lemma}
\newtheorem{definition}{Definition}

\newtheorem{remark}{Remark}
\newtheorem{corollary}{Corollary}

\title{On the Bernoulli Numbers via the Newton-Girard Identities}
\author{Mario DeFranco}

\begin{document}
\maketitle 
\abstract{We prove formulas for the Bernoulli numbers by using the Newton-Girard identities to evaluate the Riemann zeta function at positive even integers. To do this, we define a sequence of positive integers, a sequence of polynomials, and a sequence of linear operators on the space of functions. We prove properties of these polynomials, such as the positivity of their coefficients, and present a combinatorial formula for the Bernoulli numbers as a positive sum over plane trees which can be generalized as a transform of sequences. We also combinatorially prove the Newton-Girard identities using the symmetric group.
}

\section{Introduction} \label{Introduction}

The Bernoulli numbers $B_{k}$ for $ k\geq 0$ are a sequence of rational numbers that appears in many areas of mathematics, from topology to number theory. See \cite{Mazur} for an overview of their significance. They are named after Jacob Bernoulli who used them to calculate the power sums
\[
\sum_{n=1}^N n^k
\]
in his book \textit{Ars Conjectandi} published posthumously in  A.D. 1713. See \cite{Bernoulli} for an English translation. Seki Kowa is also credited with independently deriving these numbers (see \cite{Selin}). 

One of the well-known appearances of the Bernoulli numbers is in the evaluation of the Riemann zeta function at positive even integers: 
\begin{equation} \label{zeta bernoulli}
\zeta(2k) = \sum_{n=1}^\infty \frac{1}{n^{2k}} = (-1)^{k-1}\pi^{2k}\frac{2^{2k}B_{2k}}{2(2k)!}.
\end{equation}
The values $\zeta(2k)$ were  first evaluated by L. Euler in A.D. 1740 \cite{Euler}. The proof of equation \eqref{zeta bernoulli} traditionally given in the literature compares two different expansions of the function $\cot(x)$ (see \cite{Dwilewicz}). In this paper, we evaluate $\zeta(2k)$ another way using the Newton-Girard identities. These identities are combinatorial relations between elementary symmetric functions and power-sum symmetric functions. Named after I. Newton and A. Girard, they appear in Newton's \textit{Arithmetica Universalis} \cite{Newton} A.D. 1707 and Girard's paper \cite{Girard} A.D. 1629. Thus by equation \eqref{zeta bernoulli} 
our evaluation for $\zeta(2k)$ also provides formulas for the Bernoulli numbers. We describe this evaluation now.

We define a sequence of positive integers $A_k$ for $k \geq1$: 
\[
1,1,10,945, 992250, 13575766050, 2787683360962500, 9732664704199465153125, \dots 
\]
and prove that 
\[
\zeta(2k) =  \frac{\pi^{2k}}{2} \frac{A_k }{ \prod_{i=1}^k (2i+1)!!}
\]
where 
\[
(2i+1)!! = \prod_{j=1}^i (2j+1).
\]

To obtain $A_k$, we first define polynomials $P_k(x)$ and then define
\[
A_k = P_k(k).
\]
 We list the first five translated polynomials: 
\begin{align*}
P_1(\frac{x}{2}+1-\frac{3}{2}) &= 1\\ 
P_2(\frac{x}{2}+2-\frac{3}{2}) &= 1\\ 
P_3(\frac{x}{2}+3-\frac{3}{2}) &= 7+x \\ 
P_4(\frac{x}{2}+4-\frac{3}{2}) &= 465 + 130 x + 10 x^2\\ 
P_5(\frac{x}{2}+5-\frac{3}{2})&= 360045 + 142695 x + 19845 x^2 + 945 x^3\\ 
& \vdots
\end{align*}
Note that these polynomials have positive coefficients and that the sequence $A_k$ also appears as the leading coefficients. We prove these properties in Section \ref{Pk}.

We define the $P_k(x)$ recursively by defining operators $\mathcal{B}_k$. These constructions naturally arise from the Newton-Girard identities applied to symmetric functions in variables $z_n$ specialized to 
\[
z_n = \frac{1}{n^2}
\]
(see Definition \ref{e p}). In section \ref{B_k}, we present a combinatorial definition of $A_k$ as a sum over plane trees such that each term is positive. Combining this with our combinatorial evaluation \cite{DeFranco2} of 
\[
\zeta (\{ 2\}^k) = \frac{\pi^{2k}}{(2k+1)!}
\]
gives a combinatorial evaluation of $\zeta(2k)$. 
\section{The Newton-Girard Identities}
We first present definitions necessary to prove the Newton-Girard identities and our evaluation of $\zeta(2k)$. 

\begin{definition} \label{e p}
Let $z_1, z_2, ...$ be an infinite sequence of indeterminates.
For integer $k\geq 0$, let $e_k$ denote the elementary symmetric function 
\[
e_k = e_k(z_1,z_2,...) = \sum_{1 \leq n_1<n_2<...<n_k} \prod_{i=1}^k z_{n_i}
\]
with $e_0=1$;
and for $k\geq 1$, let $p_k$ denote the power sum symmetric function 
\[
p_k=p_k (z_1,z_2,...)  = \sum_{n=1}^\infty z_n^k
\]
Let $e_{\mathrm{inc}}(k;j)$ denote the incomplete $k$-th elementary symmetric function 
\[
e_{\mathrm{inc}}(k;j) = e_k(z_1, z_2, ... , z_{j-1}, z_{j+1}, ...).
\] 
Let $S_k$ denote the symmetric group on the set $\{1,2,..,k \}$. For $\sigma \in S_k$, let $p_\sigma$ denote
\[
p_\sigma = \prod_{C \in \sigma} p_{|C|}
\]
where $C$ denotes a cycle of $\sigma$ containing $|C|$ elements. For $|C|=n$, we say that $C$ has length $n$, or that $C$ is an $n$-cycle. We also let $\mathrm{sgn}(\sigma)$ denote the signature of the permutation
\[
\mathrm{sgn}(\sigma) = \prod_{C \in \sigma}(-1)^{|C|-1}.
\]
We let $\overline{e}_k$ and $\overline{p}_k$ denote the specializations of these functions at 
\[
z_n = \frac{1}{n^2}.
\] 
\end{definition}

\begin{definition} 
Define the linear operator $d_2$ by
\[
d_2(z_n) = z_n^2
\]
and extend $d_2$ to act on monomials as a derivation. 
\end{definition}

The next theorem is a well-known evaluation of the elementary symmetric function in terms of the power-sum symmetric functions. Our proof below is similar to that presented in \cite{DeFranco} applied to derivatives of the Gamma function, and to the one of K. Boklan \cite{Boklan}. 
\begin{theorem} \label{cycle index}
\[
e_k(z) =  \frac{1}{k!}\sum_{\sigma \in S_k} \mathrm{sgn}(\sigma)p_\sigma
\]
\end{theorem}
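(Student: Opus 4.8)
The plan is to prove the equivalent identity $k!\,e_k = \sum_{\sigma \in S_k} \mathrm{sgn}(\sigma)\, p_\sigma$ by a direct counting argument on $S_k$, matching monomials on the two sides. First I would expand the left-hand side: since $e_k$ is the sum of $\prod_{i=1}^k z_{n_i}$ over strictly increasing tuples $n_1 < \cdots < n_k$, multiplying by $k!$ turns it into the sum of $z_{n_1}\cdots z_{n_k}$ over all ordered tuples $(n_1,\dots,n_k)$ of \emph{distinct} indices, that is, the sum over all injective functions $g\colon \{1,\dots,k\}\to\{1,2,\dots\}$ with weight $\prod_i z_{g(i)}$.

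Next I would expand the right-hand side as a sum over pairs $(\sigma,g)$. Writing $p_\sigma=\prod_{C\in\sigma}p_{|C|}$ with $p_{|C|}=\sum_n z_n^{|C|}$, choosing one index for each cycle of $\sigma$ amounts to choosing a function $g$ that is constant on the cycles of $\sigma$; the resulting monomial is $\prod_{C} z_{g(C)}^{|C|}=\prod_{i=1}^k z_{g(i)}$, since each cycle $C$ contributes its chosen index once per element. Hence $\sum_{\sigma}\mathrm{sgn}(\sigma)\,p_\sigma = \sum_{\sigma}\sum_{g\text{ const.\ on cycles of }\sigma}\mathrm{sgn}(\sigma)\prod_i z_{g(i)}$.

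The decisive step is to exchange the order of summation and sum over $\sigma$ first. For a fixed $g$, the permutations on whose cycles $g$ is constant are precisely those satisfying $g(\sigma(i))=g(i)$ for all $i$, i.e.\ those preserving the fibers of $g$; thus $\sigma$ ranges over the internal product $\prod_v S_{g^{-1}(v)}$ of symmetric groups on the fibers, and $\mathrm{sgn}$ factors as $\prod_v \mathrm{sgn}(\sigma_v)$ because it is multiplicative on cycle structure. Since the weight $\prod_i z_{g(i)}$ depends only on $g$, the inner sum factorizes into $\prod_v \bigl(\sum_{\tau\in S_{|g^{-1}(v)|}}\mathrm{sgn}(\tau)\bigr)$. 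Using the elementary fact that $\sum_{\tau \in S_m}\mathrm{sgn}(\tau)$ equals $1$ for $m\le 1$ and $0$ for $m\ge 2$, each fiber factor vanishes unless its fiber has size at most one, that is, unless $g$ is injective, in which case the whole product equals $1$.

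Collecting the surviving terms leaves exactly $\sum_{g \text{ injective}} \prod_i z_{g(i)}$, which is the expansion of $k!\,e_k$ obtained in the first step, and dividing by $k!$ completes the proof. I expect the main obstacle to be the careful bookkeeping of the second and third steps: establishing the bijection between per-cycle index choices and functions constant on cycles, and the identification of the sign-preserving permutations with the product of symmetric groups over the fibers of $g$. Once that correspondence is in place, the vanishing of the alternating sign sum over each fiber of size at least two does the real work.
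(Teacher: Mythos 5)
Your proof is correct, but it takes a genuinely different route from the paper. The paper argues by induction on $k$: it introduces a derivation $d_2$ with $d_2(z_n)=z_n^2$, establishes the operator identity $(p_1-d_2)e_k=(k+1)e_{k+1}$, and then shows combinatorially that $(p_1-d_2)$ carries $\sum_{\sigma\in S_k}\mathrm{sgn}(\sigma)p_\sigma$ to $\sum_{\sigma\in S_{k+1}}\mathrm{sgn}(\sigma)p_\sigma$, with multiplication by $p_1$ corresponding to adjoining $k+1$ as a fixed point and $d_2$ corresponding to inserting $k+1$ into an existing cycle (which flips the sign). Your argument is instead a direct, non-inductive cancellation: expand both sides as sums over functions $g\colon\{1,\dots,k\}\to\{1,2,\dots\}$, exchange the order of summation, and observe that for fixed $g$ the signed sum over permutations preserving the fibers of $g$ factors as $\prod_v\sum_{\tau\in S_{|g^{-1}(v)|}}\mathrm{sgn}(\tau)$, which kills every non-injective $g$ and leaves weight $1$ on each injective one. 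All the steps you flag as needing care do go through: the pairs $(\sigma,\text{index per cycle})$ are in bijection with pairs $(\sigma,g)$ where $g$ is constant on cycles, the fiber-preserving permutations form the internal product of symmetric groups on the fibers, and $\mathrm{sgn}$ is multiplicative across that product. What your approach buys is self-containment and generality (it is the standard involution-free proof of the cycle-index identity and needs no auxiliary operator); what the paper's approach buys is reuse, since the operator $d_2$ and the adjoin-an-element combinatorics are the same tools the paper deploys immediately afterward to prove the Newton--Girard identities by partitioning $S_k$ according to the cycle containing $k$, and they foreshadow the recursive operator constructions ($\mathcal{B}_k$) in the rest of the paper.
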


\begin{proof}
We use induction on $k$. The statement is true for $k=1$. Assume it is true for some $k \geq 1$. Then we obtain $e_{k+1}$ from $e_k$ by first multiplying $e_k$ by $p_1$: 
\[
p_1 e_k = (k+1)e_{k+1}+ \sum_{j=1}^\infty z_j^2 e_{\mathrm{inc}}(k-1;j).
\] 
Now, since
\[
d_2(e_k)=\sum_{j=1}^\infty z_j^2 e_{\mathrm{inc}}(k-1;j),
\]
we obtain
\begin{equation} \label{k+1 e}
(p_1-d_2)e_k = (k+1)e_{k+1}.
\end{equation}

Now we compute $\displaystyle (p_1-d_2)e_k $ another way. The action of $d_2$ on $p_n$ is 
\[
d_2(p_n) = np_{n+1}.
\]
We claim
\begin{equation} \label{Sk Sk+1}
(p_1-d_2)\sum_{\sigma \in S_k} \mathrm{sgn}(\sigma)p_\sigma= \sum_{\sigma \in S_{k+1}} \mathrm{sgn}(\sigma)p_\sigma.
\end{equation}
Let $\sigma \in S_k$.
Multiplying by $p_1$ corresponds to adjoining the 1-cycle consisting of the element $k+1$ to $\sigma$. The action of $d_2$ on $p_\sigma$ corresponds to creating new permutations by adjoining $k+1$ to each cycle $C$ of $\sigma$; if $C$ is of length $n$, then there are $n$ ways to do this. Increasing the length of one cycle of $\sigma$ by 1 creates a new permutation with signature opposite to that of $\sigma$. This proves the claim.

Using the induction hypothesis, equation \eqref{Sk Sk+1} implies 
\begin{equation} \label{Sk+1}
(p_1-d_2)e_k = \frac{1}{k!}\sum_{\sigma \in S_{k+1}} \mathrm{sgn}(\sigma)p_\sigma.
\end{equation}
Combining equations \eqref{k+1 e} and \eqref{Sk+1} completes the induction step and proof.
\end{proof}

We next prove the Newton-Girard identities by partitioning the symmetric group. 
\begin{theorem}
\[
 (-1)^{k-1}p_k=ke_k -\sum_{i=1}^{k-1} (-1)^{i-1}e_{k-i} p_i 
\]
\end{theorem}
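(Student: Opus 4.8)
The plan is to prove the Newton-Girard identity by partitioning the symmetric group $S_k$ according to the cycle containing a distinguished element, say the element $k$, and then applying Theorem \ref{cycle index}. Specifically, I would start from the cycle-index formula $e_k = \frac{1}{k!}\sum_{\sigma \in S_k}\mathrm{sgn}(\sigma)p_\sigma$ and classify each $\sigma \in S_k$ by the length $i$ of the cycle $C$ containing the element $k$. Writing $n = k-i$ for the number of remaining elements, the idea is that $\sigma$ factors as a choice of the $i-1$ other elements in $C$, a cyclic arrangement of those $i$ elements (contributing $p_i$ with sign $(-1)^{i-1}$), and an arbitrary permutation of the remaining $n$ elements.

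The key steps, in order, are as follows. First I would fix the distinguished element $k$ and write $S_k = \bigsqcup_{i=1}^{k}\{\sigma : k \text{ lies in an } i\text{-cycle}\}$. Second, for each block I would count: there are $\binom{k-1}{i-1}$ ways to choose the other elements of the $i$-cycle, and $(i-1)!$ distinct cyclic orderings of those $i$ elements, so the $i$-cycle contributes a factor $\binom{k-1}{i-1}(i-1)!\,(-1)^{i-1}p_i$, while the permutation of the remaining $k-i$ elements ranges over all of $S_{k-i}$. Third, I would sum $\mathrm{sgn}$ and $p_\sigma$ multiplicatively over this factorization to obtain
\[
k!\,e_k = \sum_{i=1}^{k}\binom{k-1}{i-1}(i-1)!\,(-1)^{i-1}p_i\sum_{\tau \in S_{k-i}}\mathrm{sgn}(\tau)p_\tau.
\]
Fourth, I would recognize the inner sum as $(k-i)!\,e_{k-i}$ by Theorem \ref{cycle index}, simplify $\binom{k-1}{i-1}(i-1)!\,(k-i)! = \frac{(k-1)!}{(k-i)!}(k-i)! = (k-1)!$, and divide through by $(k-1)!$ to get $k\,e_k = \sum_{i=1}^{k}(-1)^{i-1}e_{k-i}p_i$. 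Finally I would isolate the $i=k$ term, which is $(-1)^{k-1}e_0 p_k = (-1)^{k-1}p_k$, and move the remaining terms to obtain exactly the claimed identity.

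The main obstacle I anticipate is the bookkeeping in the counting step: ensuring that the multiplicity $\binom{k-1}{i-1}(i-1)!$ correctly and bijectively accounts for every $\sigma$ with $k$ in an $i$-cycle, without double-counting cyclic rotations or omitting the residual permutation. The sign must also be tracked carefully, since each cycle of length $i$ contributes $(-1)^{i-1}$ via $\mathrm{sgn}$, and the residual permutation $\tau$ must contribute its own signature so that $\mathrm{sgn}(\sigma) = (-1)^{i-1}\mathrm{sgn}(\tau)$ factors cleanly. Once the combinatorial factorization is justified, the algebraic simplification and the extraction of the $i=k$ term are routine, so the essential content of the proof lies entirely in establishing that the partition of $S_k$ by the cycle of the distinguished element yields the binomial-factorial weight matching $e_{k-i}$.
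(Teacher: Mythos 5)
Your proposal is correct and follows essentially the same route as the paper: both partition $S_k$ by the length $i$ of the cycle containing the distinguished element $k$, use the count $\binom{k-1}{i-1}(i-1)!$ together with the sign factorization $\mathrm{sgn}(\sigma)=(-1)^{i-1}\mathrm{sgn}(\tau)$, and invoke Theorem \ref{cycle index} to identify the residual sums with $e_k$ and $e_{k-i}$. Your write-up is, if anything, slightly more explicit about the factorial bookkeeping than the paper's.
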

\begin{proof}
 We prove 
 \[
 ke_k = \sum_{i=1}^{k} (-1)^{i-1}e_{k-i} p_i. 
 \]
 From Theorem \ref{cycle index}, this is equivalent to 
 \[
\sum_{\sigma \in S_k} \mathrm{sgn}(\sigma)p_\sigma = \sum_{i=1}^k (-1)^{i-1}(i-1)!{k-1 \choose i-1}p_i\sum_{\sigma \in S_{k-i}} \mathrm{sgn}(\sigma)p_\sigma
 \]
 On the right side, we interpret a term of the form 
 \[
 p_i p_\sigma
 \] 
 for $\sigma \in S_{k-i}$ as corresponding to a permutation $\sigma' \in S_k$ such that the element $k$ is in an $i$-cycle $C$ of $\sigma'$, and $\sigma'=\sigma$ when restricted to the   elements not in $C$. There are $ \displaystyle {k-1 \choose i-1}$
 ways to choose the elements that are in the cycle $C$ and $(i-1)!$ ways to construct the cycle. And 
 \[
 \mathrm{sgn}(\sigma') =  (-1)^{i-1}\mathrm{sgn}(\sigma).
 \]
 This completes the proof.
\end{proof}

\section{The polynomials $P_k(x)$} \label{Pk}
\subsection{Evaluating $\zeta(2k)$}
We have the well-known evaluation of $\overline{e}_k$: 
\[
\overline{e}_k = \frac{\pi^{2k}}{(2k+1)!}.
\]
See \cite{DeFranco2} for a combinatorial proof of this evaluation. Since 
\[
\overline{p}_1 = \overline{e}_1,
\]
we can thus use the Newton-Girard identities to successively solve for $\overline{p}_n$ in terms of the $\overline{p}_i$ for $i <n$ and the $\overline{e}_k$.
We consider the partial sums in the Newton-Girard identities and prove a formula for them in Theorem \ref{Fnk}. We define terms for that theorem next, including the recursive definition of the polynomials $P_k(x)$. 
\begin{definition}
For integer $n\geq 2$ and $k \geq 1$, define $F_n(k)$ by
\begin{align*}
F_n(k) &= k\overline{e}_k -\sum_{i=1}^{n-1} (-1)^{i-1}\overline{e}_{k-i} \overline{p}_i \\ 
            &=  \frac{k\pi^{2k}}{(2k+1)!}  -\sum_{i=1}^{n-1} (-1)^{i-1}\frac{\pi^{2k-2i}\zeta(2i)}{(2k-2i+1)!}.
\end{align*}
\end{definition}


\begin{definition}
Define
\[
P_1(x) = 1
\]
and for $k \geq 1$
\begin{equation} \label{Pn+1}
P_{k+1}(x) = \frac{P_k(k)(\prod_{i=1}^{k} (2x-2k+2i+1))-(\prod_{i=1}^{k} (2i+1))P_k(x)   }{2x-2k}.
\end{equation}
\end{definition}
Note that $P_{k+1}(x)$ is a polynomial because the numerator of equation \eqref{Pn+1} vanishes at $x=k$. 

\begin{theorem} \label{Fnk}
For integer $n\geq 2$ and $k \geq 1$,
\[
F_n(k) = (-1)^{n-1} \frac{\pi^{2k}}{2}P_n(k) \frac{\prod_{i=1}^n (2k-2i+2)}{(2k+1)!\prod_{i=1}^{n-1} (2i+1)!!}.
\] 
\end{theorem}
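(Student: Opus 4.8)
The plan is to prove the formula by induction on $n$, exploiting two facts: a recursion that the $F_n(k)$ satisfy among themselves, and the matching recursion built into the definition of the $P_k(x)$. First I would establish the recursion for $F$. Directly from the definition,
\[
F_{n+1}(k) = F_n(k) - (-1)^{n-1}\overline{e}_{k-n}\overline{p}_n.
\]
The conceptual crux is to eliminate $\overline{p}_n$ using the Newton-Girard identity. Specializing that identity at $z_n = 1/n^2$ and comparing with the definition of $F_n(n)$ gives $(-1)^{n-1}\overline{p}_n = F_n(n)$, since $F_n(n)$ is exactly the Newton-Girard expression for $(-1)^{n-1}p_n$ at $k=n$. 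Substituting this yields the clean recursion
\[
F_{n+1}(k) = F_n(k) - \overline{e}_{k-n}F_n(n),
\]
which no longer refers to the zeta values directly.

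For the base case, I would note that with the empty-sum convention the claimed formula also holds at $n=1$, where both sides equal $k\pi^{2k}/(2k+1)!$; this gives a clean induction start (alternatively one checks $n=2$ by hand). For the inductive step, write the claimed formula as $F_n(k) = (-1)^{n-1}\tfrac{\pi^{2k}}{2}P_n(k)G_n(k)$ with $G_n(k) = \tfrac{\prod_{i=1}^n(2k-2i+2)}{(2k+1)!\prod_{i=1}^{n-1}(2i+1)!!}$, substitute into the recursion $F_{n+1}(k)=F_n(k)-\overline{e}_{k-n}F_n(n)$, and cancel the common factor $(-1)^{n-1}\tfrac{\pi^{2k}}{2}$. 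Comparing with the target $F_{n+1}(k) = (-1)^{n}\tfrac{\pi^{2k}}{2}P_{n+1}(k)G_{n+1}(k)$ reduces the problem to showing that the resulting $P$-combination equals the defining recursion.

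This reduction requires three elementary simplifications: the ratio $G_{n+1}(k)/G_n(k) = (2k-2n)/(2n+1)!!$; the factorial identity $(2n+1)! = 2^n n!\,(2n+1)!!$ (which turns the $G_n(n)/G_n(k)$ prefactor into $1/(2n+1)!!$); and, most importantly, the even-odd splitting of a product of $2n$ consecutive integers,
\[
\frac{(2k+1)!}{(2k-2n+1)!} = \Big(\prod_{i=1}^n(2k-2i+2)\Big)\Big(\prod_{i=0}^{n-1}(2k-2i+1)\Big),
\]
which arises from $G_n(n)/G_n(k)$. After these substitutions the recursion collapses to
\[
P_{n+1}(k)(2k-2n) = P_n(n)\prod_{i=0}^{n-1}(2k-2i+1) - (2n+1)!!\,P_n(k).
\]
I would then finish by reindexing $\prod_{i=0}^{n-1}(2k-2i+1) = \prod_{i=1}^n(2k-2n+2i+1)$ and recognizing $(2n+1)!! = \prod_{i=1}^n(2i+1)$, so that this is exactly the defining recursion \eqref{Pn+1} evaluated at $x=k$.

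I expect the main obstacle to be the bookkeeping in the even-odd splitting and in tracking the odd double factorials, since an off-by-one in any product index would break the match with \eqref{Pn+1}; the substitution $\overline{p}_n = (-1)^{n-1}F_n(n)$ is the only genuinely non-mechanical ingredient, and everything downstream of it is a careful but routine reconciliation of products.
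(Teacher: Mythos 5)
Your proposal is correct and follows essentially the same route as the paper: induction on $n$, the substitution $\overline{p}_n=(-1)^{n-1}F_n(n)$ via Newton--Girard to get $F_{n+1}(k)=F_n(k)-\overline{e}_{k-n}F_n(n)$, and a product-bookkeeping reduction to the defining recursion \eqref{Pn+1} evaluated at $x=k$. The only cosmetic difference is that you allow the base case $n=1$ under the empty-product convention where the paper verifies $n=2$ directly; the algebraic identities you isolate (the ratio $G_{n+1}/G_n$, the splitting $(2n+1)!=2^n n!\,(2n+1)!!$, and the even--odd factorization of $(2k+1)!/(2k-2n+1)!$) are exactly the simplifications carried out in the paper's computation.
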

\begin{proof}
We use induction on $n$.
We have from the evaluation of $\overline{e}_k$ that
\[
\overline{e}_1 = \overline{p}_1 = \zeta(2) = \frac{\pi^2}{3!}.
\]
For $n=2$ we have 
\[
F_2(k)=\frac{k\pi^{2k}}{(2k+1)!} - \frac{\pi^{2k}}{(2k-1)!3!} =-\pi^{2k}\frac{2k(2k-2)}{3!(2k+1)!}. 
\]
Since $P_2(k)=1$, this proves the statement for $n=2$. Assume the statement is true for some $n \geq 2$. Then this 
implies by the Newton-Girard identities that
\[
\overline{p}_n = (-1)^{n-1}F_n(n).
\] 
Thus
\begin{align*}
F_{n+1}(k) &= F_n(k) -(-1)^{n-1}\overline{e}_{k-n}\overline{p}_n\\
&= F_n(k) -\pi^{2k-2n}\frac{F_n(n)}{(2k-2n+1)!}.
\end{align*}
Using the induction hypothesis, this becomes 
\begin{align*}
 &(-1)^{n-1} \frac{\pi^{2k}}{2\prod_{i=1}^{n-1}(2i+1)!!}\\ 
 &\times \left( \frac{(2n+1)!P_n(k)\prod_{i=1}^n (2k-2i+2) - P_n(n) (\prod_{i=1}^n 2i)\prod_{i=1}^{2n} (2k+1-i)}{(2k+1)!(2n+1)!}\right).
\end{align*}
The quantity in parentheses simplifies to 
\begin{align*}
&\frac{(\prod_{i=1}^n (2i)(2k-2i+2))(2k-2n)}{(2n+1)!(2k+1)!} \left( \frac{P_n(k)\prod_{i=1}^n (2i+1) - P_n(n)\prod_{i=1}^n (2k-2n+2i+1)  }{2k-2n}\right)\\
=& \frac{\prod_{i=1}^n (2k-2i+2)}{(2n+1)!!(2k+1)!} (-P_{n+1}(k)).
\end{align*}
Putting this together proves the induction step. This completes the proof. 
\end{proof}

\begin{corollary} For integer $k \geq 1$,
\[
\zeta(2k) =  \frac{\pi^{2k}}{2} \frac{P_k(k)}{\prod_{i=1}^{k}(2i+1)!!}
\]
\end{corollary}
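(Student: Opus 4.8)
The plan is to recognize that the Corollary is essentially the diagonal case $n = k$ of Theorem~\ref{Fnk}, combined with the Newton-Girard relationship between the partial sums $F_n(k)$ and the power sums $\overline{p}_n$. The key observation I would exploit is the identity $\overline{p}_n = (-1)^{n-1}F_n(n)$, which already appears in the proof of Theorem~\ref{Fnk}: this holds because setting $k = n$ in the definition of $F_n(k)$ makes the full Newton-Girard identity close up, so that $F_n(n) = n\overline{e}_n - \sum_{i=1}^{n-1}(-1)^{i-1}\overline{e}_{n-i}\overline{p}_i$ equals precisely the partial sum whose completion yields $(-1)^{n-1}\overline{p}_n$. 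Since $\overline{p}_n = \zeta(2n)$ under the specialization $z_n = 1/n^2$, this is exactly the bridge I need.

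First I would substitute $n = k$ directly into the formula of Theorem~\ref{Fnk}, giving
\[
F_k(k) = (-1)^{k-1}\frac{\pi^{2k}}{2}P_k(k)\frac{\prod_{i=1}^k (2k - 2i + 2)}{(2k+1)!\prod_{i=1}^{k-1}(2i+1)!!}.
\]
Next I would simplify the product $\prod_{i=1}^k (2k - 2i + 2)$. Reindexing with $j = k - i + 1$ turns this into $\prod_{j=1}^k (2j) = 2^k\, k!$, which one recognizes as $(2k)!/(2k-1)!!$. The goal is to combine this numerator with $(2k+1)!$ in the denominator so that, together with the remaining double-factorial product, everything collapses to the single factor $\prod_{i=1}^k (2i+1)!!$ claimed in the Corollary.

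Then I would invoke $\zeta(2k) = \overline{p}_k = (-1)^{k-1}F_k(k)$ to cancel the sign $(-1)^{k-1}$ and produce a manifestly positive expression. The main obstacle — really the only nontrivial point — is the bookkeeping of the factorials and double factorials: I must verify that
\[
\frac{2^k k!}{(2k+1)!\,\prod_{i=1}^{k-1}(2i+1)!!} = \frac{1}{\prod_{i=1}^{k}(2i+1)!!},
\]
which amounts to checking that $(2k+1)! = 2^k k!\,(2k+1)!!$ and that the missing $k$-th double-factorial factor $(2k+1)!!$ gets supplied precisely by this relation. Using $(2k+1)! = (2k+1)!!\,(2k)!! = (2k+1)!!\,2^k k!$, the identity is immediate, and this closes the argument with no further work.
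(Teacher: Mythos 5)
Your proposal is correct and follows the paper's proof exactly: invoke $\overline{p}_k = (-1)^{k-1}F_k(k)$ from the Newton--Girard identities, evaluate $F_k(k)$ by setting $n=k$ in Theorem~\ref{Fnk}, and collapse the factorials using $\prod_{i=1}^k(2k-2i+2)=2^k k!$ and $(2k+1)! = 2^k k!\,(2k+1)!!$. The one detail the paper includes that you omit is the case $k=1$, which must be checked separately since Theorem~\ref{Fnk} and the definition of $F_n(k)$ require $n\geq 2$; there one verifies directly that $\zeta(2)=\pi^2/6=\frac{\pi^2}{2}\cdot\frac{P_1(1)}{3}$.
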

\begin{proof}
We have by the Newton-Girard identities for $k\geq 2$ 
\[
\overline{p}_k = (-1)^{k-1}F_k(k). 
\]
We then evaluate $F_k(k)$ using the theorem. We check that the statement is also true for $k=1$. This completes the proof.
\end{proof}
\subsection{The operators $\mathcal{B}_k$} \label{B_k}

The recursive definition of $P_k(x)$ motivates the following definition of the operator $\mathcal{B}_k$. 
\begin{definition}
 For integer $k \geq 1$ and a function $f(x)$, define the operator $\mathcal{B}_k$ by
 \[
\mathcal{B}_k(f)(x) = \frac{f(k)(\prod_{i=1}^k (2x-2k+2i+1)) - f(x)\prod_{i=1}^k (2i+1)}{2x-2k}.
 \] 
\end{definition}
We thus can define the $P_k(x)$ by 
\[
 P_{k+1}(x) = \mathcal{B}_k \mathcal{B}_{k-1} ... \mathcal{B}_1 (1).
 \]
\begin{lemma} \label{u ai}
Let $u$ and $a_i$ for $1 \leq i \leq k$ be indeterminates. Then
\[
\prod_{i=1}^k (u+a_i)-\prod_{i=1}^k a_i = u\sum_{j=1}^{k} ((\prod_{i=1}^{j-1} (u+a_i) )\prod_{i=j+1}^k a_i )
\] 
where we interpret an empty product to be equal to 1.
\end{lemma}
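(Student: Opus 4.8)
The plan is to prove this by telescoping. The right-hand side is begging to be read as a sum of consecutive differences, so I would first introduce the mixed products
\[
T_j = \Bigl(\prod_{i=1}^{j} (u+a_i)\Bigr)\Bigl(\prod_{i=j+1}^k a_i\Bigr)
\]
for $0 \le j \le k$, again interpreting empty products as $1$. The two extreme cases are exactly the two products appearing on the left-hand side of the lemma: $T_0 = \prod_{i=1}^k a_i$ and $T_k = \prod_{i=1}^k (u+a_i)$. Hence the entire left-hand side equals $T_k - T_0$.

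The key computation is then the single difference $T_j - T_{j-1}$ for $1 \le j \le k$. Both $T_j$ and $T_{j-1}$ share the common factor $\bigl(\prod_{i=1}^{j-1}(u+a_i)\bigr)\bigl(\prod_{i=j+1}^k a_i\bigr)$; they differ only in the middle factor, which is $(u+a_j)$ in $T_j$ and $a_j$ in $T_{j-1}$. Factoring out the common product leaves $(u+a_j) - a_j = u$, so
\[
T_j - T_{j-1} = u\,\Bigl(\prod_{i=1}^{j-1}(u+a_i)\Bigr)\Bigl(\prod_{i=j+1}^k a_i\Bigr),
\]
which is precisely the $j$-th summand of the right-hand side.

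Summing this identity over $j = 1, \dots, k$ makes the left side telescope to $T_k - T_0$, which as noted equals the left-hand side of the lemma, while the right side reproduces the stated sum verbatim. This would complete the argument.

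The only real care needed---and the place I would expect a careless slip---is the boundary bookkeeping with empty products: at $j=1$ the factor $\prod_{i=1}^{0}(u+a_i)$ must be read as $1$, and at $j=k$ the factor $\prod_{i=k+1}^{k} a_i$ must be read as $1$, matching the lemma's convention. A purely mechanical alternative, should one prefer to avoid the telescoping notation, is induction on $k$: split off the factor $(u+a_k)$ from each product and reduce to the $k-1$ case. The telescoping proof, however, is shorter and makes the combinatorial structure of the identity transparent, so that is the route I would take.
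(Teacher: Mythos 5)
Your telescoping argument is correct: with $T_j = \bigl(\prod_{i=1}^{j}(u+a_i)\bigr)\bigl(\prod_{i=j+1}^{k}a_i\bigr)$ one indeed has $T_j - T_{j-1} = u\bigl(\prod_{i=1}^{j-1}(u+a_i)\bigr)\bigl(\prod_{i=j+1}^{k}a_i\bigr)$, and summing gives exactly the stated identity, with the empty-product conventions handled as you describe. The paper instead proves the lemma by induction on $k$, splitting off the factor $(u+a_{k+1})$ and applying the inductive hypothesis to the remaining product --- precisely the ``mechanical alternative'' you mention at the end. The two arguments are close cousins (the induction is essentially your telescoping sum built up one term at a time), but yours has the advantage of exhibiting the identity as a single sum of consecutive differences, which makes it evident at a glance why the factor $u$ appears and why the boundary terms are the two products on the left-hand side; the paper's induction trades that transparency for a routine verification of the inductive step. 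Either proof is complete and correct.
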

\begin{proof}
We use induction on $k$. The statement is true for $k=1$. Assume it is true for some $k \geq 1$. Then we have 
\begin{align*}
\prod_{i=1}^{k+1} (u+a_i) &= (u+a_{k+1})\prod_{i=1}^{k} (u+a_i) \\ 
&=u\prod_{i=1}^{k} (u+a_i)+ a_{k+1}\left(u\sum_{j=1}^{k} ((\prod_{i=1}^{j-1} (u+a_i) )\prod_{i=j+1}^{k} a_i )+ \prod_{i=1}^{k} a_i \right)\\ 
&=u\prod_{i=1}^{k} (u+a_i)+ \left(u\sum_{j=1}^{k} ((\prod_{i=1}^{j-1} (u+a_i) )\prod_{i=j+1}^{k+1} a_i )+ \prod_{i=1}^{k+1} a_i \right)\\ 
&= \left(u\sum_{j=1}^{k+1} ((\prod_{i=1}^{j-1} (u+a_i) )\prod_{i=j+1}^{k+1} a_i ) \right)+ \prod_{i=1}^{k+1} a_i.
\end{align*}
This proves the induction step and completes the proof. 
\end{proof}

Next we define terms necessary to state Lemma \ref{B action}.
\begin{definition} 

For an integer $k \geq 1$, let $R(k)$ denote the set 
\[
R(k) = \{3,5,7, ... , 2k+1 \}
\]
with $R(0) = \emptyset$.
For a set $S$ of integers and an integer $m$, let $S+m$ denote the set 
\[
\bigcup_{s\in S} \{s+m\}
\]
where $S+m= \emptyset$ if $S=\emptyset$.
Given $k\geq 2$, suppose $S$ is a set of integers such that 
\[
S \subset R(k-2).
\]
Let $j$ be an integer $0 \leq j \leq k - 1-|S|$. Let $S_{\mathrm{low}}(j,k)$ denote the set consisting of the numbers in $S+2$ and the $j$-th smallest numbers in 
 \[
 R(k-1) - (S+2)
 \]
 with $S_{\mathrm{low}}(0,k) = S+2$. Let $$S_{\mathrm{high}}(j,k)$$ denote the set consisting of the numbers in $S+2$ and the $j$-th highest numbers in 
 \[
 (R(k-1)+2) - (S+2)
 \]
with $S_{\mathrm{high}}(0,k)= S+2$.
Define for non-empty $S$  
\[
\Pi S = \prod_{s \in S} s
\]
and for $S = \emptyset$
\[
\Pi S =1.
\]

Let $f_{S,k}(x)$ denote
\[
 f_{S;k}(x)= \prod_{s \in S} (2x-2k+ s).  
 \]
\end{definition}

\begin{lemma}  \label{B action}
For $k \geq 2$, suppose $S \subset R(k-2)$. 
 Then 
 \[
 \mathcal{B}_k (f_{S;k-1})(x) = \sum_{j=0}^{k -1-|S|} (\Pi S_{\mathrm{high}}(k-1-|S|-j,k) )f_{S_{\mathrm{low}}(j,k); k}(x)  
 \]
\end{lemma}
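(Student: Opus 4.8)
The plan is to evaluate $\mathcal{B}_k(f_{S;k-1})$ directly from its definition, reduce the numerator to a single difference of the form handled by Lemma~\ref{u ai}, and then match the resulting sum term-by-term against the right-hand side. First I would set $u = 2x-2k$ and record the three ingredients of the numerator in these terms. Since $f_{S;k-1}(x) = \prod_{s\in S}(2x-2(k-1)+s) = \prod_{s\in S+2}(u+s)$, evaluating at $x=k$ gives $f_{S;k-1}(k) = \Pi(S+2)$. Likewise $\prod_{i=1}^k(2x-2k+2i+1) = \prod_{r\in R(k)}(u+r)$ and $\prod_{i=1}^k(2i+1) = \Pi R(k)$, because $\{2i+1 : 1\le i\le k\} = R(k)$. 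Writing $T = S+2$, the numerator of $\mathcal{B}_k(f_{S;k-1})$ becomes $\Pi T\prod_{r\in R(k)}(u+r) - \Pi R(k)\prod_{t\in T}(u+t)$.

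The key structural observation is that $T\subseteq R(k)$: since $S\subset R(k-2) = \{3,5,\dots,2k-3\}$, we have $T = S+2 \subset \{5,7,\dots,2k-1\}\subset R(k)$. Set $Q = R(k)\setminus T$, so $|Q| = k-|S|$ and, crucially, $3 = \min Q$ and $2k+1 = \max Q$ both lie in $Q$. Factoring $\prod_{r\in R(k)}(u+r) = \prod_{t\in T}(u+t)\prod_{q\in Q}(u+q)$ and using $\Pi R(k) = \Pi T\cdot\Pi Q$, the numerator collapses to
\[
\Pi T\cdot\prod_{t\in T}(u+t)\Bigl(\prod_{q\in Q}(u+q) - \Pi Q\Bigr).
\]
Now I would apply Lemma~\ref{u ai}, taking the $a_i$ to be the elements $q_1<q_2<\dots<q_m$ of $Q$ in increasing order (with $m = k-|S|$); this expresses $\prod_{q\in Q}(u+q)-\Pi Q$ as $u$ times a sum of $m$ terms, and the factor $u$ cancels the denominator $2x-2k = u$ of $\mathcal{B}_k$.

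Reindexing that sum by $j = j'-1$ (so $0\le j\le m-1 = k-1-|S|$), the $j$-th term carries the polynomial factor $\prod_{t\in T}(u+t)\prod_{i=1}^{j}(u+q_i)$ and the scalar $\Pi T\cdot\prod_{i=j+2}^m q_i$. The remaining work, which is the only delicate part, is to identify these with $f_{S_{\mathrm{low}}(j,k);k}$ and $\Pi S_{\mathrm{high}}(k-1-|S|-j,k)$ respectively. For the polynomial factor, the ``$+$'' values are $T\cup\{q_1,\dots,q_j\}$, namely $T$ together with the $j$ smallest elements of $Q = R(k)\setminus T$; since $R(k)\setminus T$ and $R(k-1)\setminus T$ differ only by the top element $2k+1$, their $j$ smallest elements agree whenever $j\le k-1-|S|$, and this yields exactly $S_{\mathrm{low}}(j,k)$. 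For the scalar, $\prod_{i=j+2}^m q_i$ is the product of the $\ell := k-1-|S|-j$ largest elements of $Q$; since $(R(k-1)+2)\setminus T = Q\setminus\{3\}$ differs from $Q$ only by the bottom element $3$, the $\ell$ largest elements coincide, and multiplying by $\Pi T$ gives $\Pi S_{\mathrm{high}}(\ell,k)$. Matching the ranges of summation then finishes the argument. I expect the bookkeeping in this last step---checking that the $R(k-1)$ and $R(k-1)+2$ appearing in the definitions of $S_{\mathrm{low}}$ and $S_{\mathrm{high}}$ reproduce precisely the smallest and largest elements of $Q$ that Lemma~\ref{u ai} produces---to be the main obstacle, while everything else is routine algebra.
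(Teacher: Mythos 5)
Your proposal is correct and follows essentially the same route as the paper: rewrite $f_{S;k-1}=f_{S+2;k}$, factor the numerator of $\mathcal{B}_k$ through the complement $Q=R(k)\setminus(S+2)$, and apply Lemma~\ref{u ai} with $u=2x-2k$ and the $a_i$ the elements of $Q$ in increasing order. The only difference is that you carry out the final identification with $S_{\mathrm{low}}(j,k)$ and $S_{\mathrm{high}}(k-1-|S|-j,k)$ explicitly (correctly using that $3=\min Q$ and $2k+1=\max Q$), a bookkeeping step the paper leaves implicit.
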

\begin{proof}
 Applying the definition of $f_{S;k}(x)$ we have 
 \[
 f_{S;k-1}(x) = f_{S+2;k}(x).
 \]
 Then
 \begin{align*}
 \mathcal{B}_k (f_{S+2;k})(x) &= \frac{f_{S+2;k}(k)(\prod_{i=1}^k (2x-2k+2i+1)) - f_{S+2;k}(x)\prod_{i=1}^k (2i+1)}{2x-2k}\\ 
 &= \frac{(\prod_{s \in S+2} s(2x-2k+s)) \Big((\prod_{s \in R(k)-(S+2)} (2x-2k+s)) - \prod_{s \in R(k)-(S+2)} s\Big)}{2x-2k}.
 \end{align*}
 Now we apply Lemma \ref{u ai} with 
 \[
 u = 2x-2k
 \]
 and $a_i$ the $i$-th smallest number in the set 
 \[
 R(k) - (S+2)
 \] 
 for $1 \leq i \leq k-|S|$. This completes the proof.
\end{proof}

\begin{theorem}
For integer $k \geq 2$, the polynomial $P_{k}(x)$ is a positive linear combination of functions of the form $f_{S;k-1}(x)$ where $S \subset R(k-2)$.  
\end{theorem}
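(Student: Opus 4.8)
The plan is to prove the statement by induction on $k$, letting Lemma \ref{B action} do all the real work: it already supplies a positive expansion of $\mathcal{B}_k(f_{S;k-1})$ in exactly the basis we want, so the theorem should follow by propagating positivity through the recursion $P_{k+1}(x) = \mathcal{B}_k(P_k)(x)$ coming from \eqref{Pn+1}.

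For the base case $k=2$, I would compute $P_2(x) = \mathcal{B}_1(1)$ directly from the definition of $\mathcal{B}_1$, obtaining $P_2(x) = 1$. Since the only subset of $R(0) = \emptyset$ is $S = \emptyset$, and $f_{\emptyset;1}(x) = 1$ by the empty-product convention, this exhibits $P_2(x) = 1 \cdot f_{\emptyset;1}(x)$ as a positive linear combination of the required form.

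For the inductive step, assume $P_k(x) = \sum_{S \subset R(k-2)} c_S\, f_{S;k-1}(x)$ with each $c_S$ positive. Because $\mathcal{B}_k$ is linear and $P_{k+1}(x) = \mathcal{B}_k(P_k)(x)$, I would write $P_{k+1}(x) = \sum_S c_S\, \mathcal{B}_k(f_{S;k-1})(x)$. Every $S$ in this sum satisfies the hypothesis $S \subset R(k-2)$ of Lemma \ref{B action}, so applying that lemma term by term gives
\[
P_{k+1}(x) = \sum_{S \subset R(k-2)} c_S \sum_{j=0}^{k-1-|S|} \big(\Pi S_{\mathrm{high}}(k-1-|S|-j, k)\big)\, f_{S_{\mathrm{low}}(j,k);\, k}(x).
\]
Each coefficient $\Pi S_{\mathrm{high}}(\cdots)$ is a product of positive odd integers (the elements of $S+2$ and of $(R(k-1)+2)-(S+2)$), hence positive, and together with $c_S > 0$ this makes every coefficient in the double sum positive. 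Collecting terms with equal index set $S_{\mathrm{low}}(j,k)$ only adds positive quantities, so the resulting combination in the basis $\{f_{S';k}\}$ still has positive coefficients.

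The remaining point, and the only place requiring genuine care, is the index bookkeeping: I must confirm that each output $f_{S_{\mathrm{low}}(j,k);k}(x)$ is legitimately of the form $f_{S';k}(x)$ with $S' \subset R(k-1)$, as the target statement for $P_{k+1}$ demands. Since $S \subset R(k-2)$ forces $S+2 \subset R(k-1)$, and $S_{\mathrm{low}}(j,k)$ adjoins to $S+2$ only elements drawn from $R(k-1)-(S+2)$, we indeed have $S_{\mathrm{low}}(j,k) \subset R(k-1)$. Thus the hypothesis of Lemma \ref{B action} matches the induction hypothesis exactly, and one application of $\mathcal{B}_k$ carries a positive expansion over $R(k-2)$ to a positive expansion over $R(k-1)$, completing the induction. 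There is no analytic obstacle here; the whole difficulty has been absorbed into Lemma \ref{B action}, and what is left is purely verifying these inclusions and the preservation of positivity.
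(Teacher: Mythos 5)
Your proposal is correct and follows essentially the same route as the paper: induction on $k$ with base case $P_2(x)=f_{\emptyset;1}(x)$, and an inductive step that applies Lemma \ref{B action} term by term to the positive expansion of $P_k$, using the linearity of $\mathcal{B}_k$ and the positivity of the coefficients $\Pi S_{\mathrm{high}}(\cdot,k)$. The paper states the inductive step in one line; your additional verification that $S_{\mathrm{low}}(j,k)\subset R(k-1)$ is exactly the bookkeeping the paper leaves implicit.
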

\begin{proof}
We use induction on $k$. The statement is true for $k=2$ as
\[
P_2(x) = 1 = f_{\emptyset,1}(x).
\] 
The induction step follows from Lemma \ref{B action}.
\end{proof}

\begin{corollary} \label{positive coefficients}
The polynomial $P_k(x+k-\frac{3}{2})$ has positive coefficients in $x$.
\end{corollary}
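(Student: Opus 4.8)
The plan is to deduce this immediately from the preceding theorem by a change of variables. That theorem expresses $P_k(x)$, for $k \geq 2$, as a positive linear combination of the functions $f_{S;k-1}(x)$ with $S \subset R(k-2)$, so it suffices to show that each such $f_{S;k-1}$, after the shift $x \mapsto x + k - \frac{3}{2}$, is a polynomial in $x$ with positive coefficients; a positive linear combination of polynomials with positive coefficients again has positive coefficients.

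First I would substitute $x \mapsto x + k - \frac{3}{2}$ directly into the definition $f_{S;k-1}(x) = \prod_{s \in S}(2x - 2(k-1) + s)$. A short computation shows each factor becomes
\[
2(x + k - \tfrac{3}{2}) - 2(k-1) + s = 2x + (s-1),
\]
so that $f_{S;k-1}(x + k - \frac{3}{2}) = \prod_{s \in S}(2x + s - 1)$. The key observation is then that $R(k-2) = \{3,5,\ldots,2k-3\}$ consists only of integers $s \geq 3$; hence each linear factor $2x + (s-1)$ has leading coefficient $2 > 0$ and constant term $s - 1 \geq 2 > 0$, and a product of linear polynomials with positive coefficients has positive coefficients. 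Combining this with the positivity of the coefficients in the linear combination furnished by the theorem would complete the argument for $k \geq 2$. I would then dispose of the remaining case $k = 1$ by direct inspection, since $P_1(x + 1 - \frac{3}{2}) = P_1(x - \frac{1}{2}) = 1$.

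There is essentially no obstacle here: the entire difficulty — establishing that the combination of the $f_{S;k-1}$ is genuinely positive — has already been absorbed into the preceding theorem. The only point requiring care is verifying that the shift $x \mapsto x + k - \frac{3}{2}$ is precisely the one sending every factor $2x - 2(k-1) + s$ to $2x + (s-1)$ with nonnegative constant term, which is what forces the normalization by $-\frac{3}{2}$ rather than by any other constant, and in matching this shift against the definition of $R(k-2)$ to confirm $s \geq 3$ in every surviving factor.
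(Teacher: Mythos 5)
Your proposal is correct and follows essentially the same route as the paper: invoke the preceding theorem to write $P_k(x)$ as a positive combination of the $f_{S;k-1}(x)$, then observe that under the shift $x \mapsto x + k - \tfrac{3}{2}$ each linear factor becomes $2x + (s-1)$ with $s \geq 3$, hence has positive coefficients. Your version is in fact slightly more explicit than the paper's (which only notes the factors have the form $2x-2k+m$ with $m \geq 3$), and you additionally check the trivial case $k=1$.
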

\begin{proof}
For $S \subset R(k-2)$, the function $f_{S;k-1}(x)$ is either $1$ or a product of factors of the form 
\[
(2x-2k+m)
\]
where $m \geq 3$. By the theorem, $P_k(x)$ is a positive linear combination of functions $f_{S;k-1}(x)$. This completes the proof.
\end{proof}

We use Lemma  \ref{B action} to express $P_k(x)$ as a sum of positive terms over the set $\mathcal{T}_k$ of plane trees with $k$ vertices. To each tree $T$ we associate two finite sets of integers, $\mathrm{Low}(T)$ and $\mathrm{High}(T)$. For the trees $T$ consisting of one or two vertices, we set 
\[
\mathrm{Low}(T) = \mathrm{High}(T)= \emptyset.
\] 
Suppose $T\in \mathcal{T}_k$ for $k \geq 3$ and let $v$ be the last vertex of $T$ traversed in the preorder.  Say that $v$ is at the $i$-th level of $T$, where $i$ is the number of edges on the path between $v$ and the root. So $1 \leq i \leq k-1$. Let $T'$ denote 
\[
T' = T \backslash v.
\]
 Then set $\mathrm{Low}(T) $ to be the set consisting of the elements in $\mathrm{Low}(T') +2$ and the $k-i-1$ smallest elements in $R(k-2) - (L(T')+2)$; and set $\mathrm{High}(T)$ to be the set consisting of the elements in $\mathrm{Low}(T')+2$ and the $i-1$ greatest elements in $(R(k-2)+2) - (\mathrm{Low}(T')+2)$.
Now define the weight of $T$ to be 
\[
\mathrm{wt}(T) = \mathrm{wt}(T')  \Pi( \mathrm{High}(T))
\]
with $\mathrm{wt}(T) = 1$ for $T \in \mathcal{T}_1$ or $\mathcal{T}_2$.
Then 
\begin{equation} \label{P tree}
P_k(x) = \sum_{T \in \mathcal{T}_k} \mathrm{wt}(T) f_{\mathrm{Low}(T);k-1}(x)
\end{equation}
and thus
\begin{equation} \label{A tree}
 A_k =P_k(k)= \sum_{T \in \mathcal{T}_k} \mathrm{wt}(T)\Pi(\mathrm{Low}(T)+2).
\end{equation}

\begin{theorem} \label{leading}
For integer $k \geq 2$, the leading coefficient of $P_k(x)$ is
\[
A_{k-1}2^{k-2}.
\] 
\end{theorem}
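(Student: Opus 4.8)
The plan is to read the leading coefficient straight off the recursion \eqref{Pn+1}, after first pinning down the degree of $P_k$. The crucial structural observation is that in the numerator of \eqref{Pn+1} the two terms have \emph{different} degrees, so the top-degree behavior is governed by the first term alone and no cancellation occurs at the leading order.

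First I would establish by induction that $\deg P_k = k-2$ for $k \geq 2$ (and $\deg P_1 = 0$), so that in particular $\deg P_k < k$ for every $k \geq 1$. The base cases $P_1 = P_2 = 1$ are immediate. For the inductive step, assume $\deg P_k < k$. In the numerator of \eqref{Pn+1}, the term $P_k(k)\prod_{i=1}^k (2x - 2k + 2i + 1)$ is a polynomial of degree exactly $k$, while the term $(\prod_{i=1}^k (2i+1)) P_k(x)$ has degree $\deg P_k < k$. Hence the numerator has degree $k$, and after dividing by the degree-one polynomial $2x - 2k$ we obtain $\deg P_{k+1} = k-1 = (k+1)-2$, completing the induction.

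With the degree in hand, the leading coefficient follows from a single application of \eqref{Pn+1} with $k$ replaced by $k-1$. Since $\deg P_{k-1} < k-1$, the subtracted term $(\prod_{i=1}^{k-1}(2i+1))P_{k-1}(x)$ does not contribute to the top-degree coefficient, so the leading coefficient of the numerator equals that of $P_{k-1}(k-1)\prod_{i=1}^{k-1}(2x - 2(k-1) + 2i + 1)$. Each of the $k-1$ factors has leading term $2x$, so this product has leading coefficient $2^{k-1}$; multiplying by $P_{k-1}(k-1) = A_{k-1}$ gives numerator leading coefficient $A_{k-1} 2^{k-1}$. Dividing by $2x - 2(k-1)$, whose leading coefficient is $2$, yields $A_{k-1} 2^{k-2}$ as the leading coefficient of $P_k(x)$, which is the claim.

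I do not anticipate any substantive obstacle: the whole argument is bookkeeping resting on the one fact that the subtracted term has strictly smaller degree than the leading product, so that the leading terms cannot cancel. The only point that needs a little care is verifying this degree inequality, which is precisely what the preliminary induction secures; the base case $k=2$ (where $P_2 = 1$ and $A_1 = 1$ give leading coefficient $1 = A_1 2^0$) is then subsumed by the general computation rather than needing separate treatment.
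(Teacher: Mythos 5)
Your argument takes a genuinely different route from the paper. The paper proves this theorem from the tree expansion \eqref{P tree}: it identifies the trees whose last preorder vertex sits at level $1$ as the only ones contributing a degree-$(k-2)$ term $f_{R(k-2);k-1}(x)$ (leading coefficient $2^{k-2}$), and then recognizes the sum of their weights as $A_{k-1}$ via \eqref{A tree}. You instead read the leading coefficient directly off the defining recursion \eqref{Pn+1}, using the degree gap between the two numerator terms. Your route is more elementary in that it needs none of the machinery of Section \ref{B_k} (the operators $\mathcal{B}_k$, Lemma \ref{B action}, or the tree weights); the paper's route has the advantage that positivity and non-degeneracy are automatic, since the top-degree coefficient appears there as a sum of manifestly positive tree weights.

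That last point is where your write-up has a gap. Your inductive step asserts that $P_k(k)\prod_{i=1}^k(2x-2k+2i+1)$ has degree \emph{exactly} $k$, and your final computation takes the numerator's leading coefficient to be $A_{k-1}2^{k-1}$; both statements fail if $P_k(k)=A_k=0$, and nothing in your argument rules that out. (If some $A_{k_0}$ vanished, the degree of every subsequent $P_k$ would collapse and the induction would break.) The gap is easy to close from results already established earlier in the paper: the corollary to Theorem \ref{Fnk} gives $\zeta(2k)=\tfrac{\pi^{2k}}{2}P_k(k)/\prod_{i=1}^k(2i+1)!!$, hence $P_k(k)>0$ since $\zeta(2k)>0$; alternatively, Corollary \ref{positive coefficients} shows $P_k(x+k-\tfrac32)$ has positive coefficients, so $P_k(k)>0$ by evaluating at $x=\tfrac32$. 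With $A_k>0$ supplied by either of these, your degree induction and leading-coefficient computation go through exactly as written.
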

\begin{proof} 
For $k \geq 2$, $P_k(x)$ has degree $k-2$. In the sum \eqref{P tree}, the only trees that contribute a term of $x^{k-2}$ are the those trees whose last vertex $v$ in the preorder is at level 1. For such trees $T$ 
\[
\mathrm{Low}(T) = R(k-2) \text{ and } \mathrm{High}(T) =  \mathrm{Low}(T')+2.
\]
The leading coefficient of $P_k(x)$ is thus 
\begin{align*}
&2^{k-2} \sum_{T \in \mathcal{T}_k, \mathrm{level}(v)=1}  \mathrm{wt}(T)\\ 
&=2^{k-2}  \sum_{T' \in \mathcal{T}_{k-1}} \mathrm{wt}(T') \Pi(\mathrm{Low}(T')+2)\\ 
&=2^{k-2} A_{k-1} 
\end{align*}
by formula \eqref{A tree}. This completes the proof.
\end{proof}

\begin{remark} \emph{
We can express the rational sequence $\displaystyle2\frac{\zeta(2k)}{\pi^{2k}} $ as a transform of the sequence $R = \{R_n\}_{n=1}^\infty$ given by 
\[
R_n =2n+1, \,\,\,\, n\geq 1.
\]  
We write
\begin{equation}\label{zeta rational}
2\frac{\zeta(2k)}{\pi^{2k}} = \frac{\sum_{T \in \mathcal{T}_k} \mathrm{wt}_R(T)}{\prod_{j=1}^k \Pi R(j)}
\end{equation}
where we define $R(k)$ as above, but for $\mathrm{wt}_R(T)$ we interpret the sets $\mathrm{Low}(T)$ and $\mathrm{High}(T)$ as subsets of $R$; for such a subset $S$ we write
\[
S = \{ R_{i_1},..., R_{i_n}\}.
\]
We may then express the operation $S+2$ as
\[
S+2 = \{ R_{i_1+1},..., R_{i_n+1}\}.
\]
The sequence \eqref{zeta rational} can thus be generalized by varying the sequence $R$. 
}

\end{remark}

\subsection{A recursive relation}

Next we prove a linear recursive relation among the coefficients of $P_k(x)$ in the basis $f_{R(n);k-1}(x)$. We prove the following lemma necessary for the recursion. 

\begin{lemma} \label{2ni}
\[
\prod_{i=1}^n (u+ 2i+3) = \sum_{i=0}^n 2^{n-i} \frac{n!}{i!} \prod_{j=1}^i (u+2j+1)
\] 
\end{lemma}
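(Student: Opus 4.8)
The plan is to prove Lemma~\ref{2ni} by induction on $n$. The base case $n=0$ is immediate, since both sides reduce to the empty product $1$ (and one may verify $n=1$ as a sanity check). For the inductive step, I would multiply the inductive hypothesis through by the single new factor $(u+2n+5)$, which turns the left-hand side $\prod_{i=1}^n(u+2i+3)$ into $\prod_{i=1}^{n+1}(u+2i+3)$, exactly the left-hand side for $n+1$. The task then becomes re-expressing the resulting right-hand side back into the prescribed basis of partial products $\prod_{j=1}^i(u+2j+1)$.

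The key observation driving the step is the splitting
\[
u+2n+5 = \big(u+2(i+1)+1\big) + 2(n-i+1),
\]
which yields
\[
(u+2n+5)\prod_{j=1}^i(u+2j+1) = \prod_{j=1}^{i+1}(u+2j+1) + 2(n-i+1)\prod_{j=1}^i(u+2j+1).
\]
The first term promotes the partial product from index $i$ to index $i+1$, while the second leaves it at index $i$. Substituting this into the summed hypothesis splits the expression into two sums; I would then reindex the first sum so that its summation variable again labels a partial product $\prod_{j=1}^i(u+2j+1)$, bringing both sums onto a common footing, and collect the coefficient of each such product.

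The coefficient comparison is the crux of the bookkeeping. The boundary indices $i=0$ and $i=n+1$ each receive a contribution from only one of the two sums, and are checked directly to produce $2^{\,n+1}(n+1)!$ and $1$ respectively. For the interior indices $1\le i\le n$ both sums contribute, and the match reduces to the elementary identity
\[
2^{\,n-i+1}\frac{n!}{(i-1)!} + 2^{\,n-i+1}(n-i+1)\frac{n!}{i!} = 2^{\,n+1-i}\frac{(n+1)!}{i!},
\]
which follows after factoring out $2^{\,n-i+1}\frac{n!}{i!}$ from using $i+(n-i+1)=n+1$. This is precisely the target coefficient, so the induction closes. The main obstacle here is not conceptual but organizational: after the reindexing the two sums run over slightly different ranges, and an off-by-one error in aligning them or in isolating the two boundary terms is the easiest way to break the coefficient matching. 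Once the splitting identity above is in place and the ranges are aligned correctly, the remaining verification is routine.
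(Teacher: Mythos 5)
Your proof is correct, but it takes a genuinely different route from the paper. The paper treats both sides as degree-$n$ polynomials in $u$ and checks equality at the $n+1$ points $u=-3,-5,\dots,-2n-3$: at $u=-3$ both sides equal $n!\,2^n$, and at $u=-2m-3$ for $1\le m\le n$ the left side vanishes (one factor is zero) while the right side collapses to $2^n n!\sum_{i=0}^m(-1)^i\binom{m}{i}=0$ via the alternating binomial identity. You instead induct on $n$, multiplying through by the new factor $(u+2n+5)$ and using the splitting $(u+2n+5)=(u+2(i+1)+1)+2(n-i+1)$ to re-expand in the basis $\prod_{j=1}^i(u+2j+1)$; I checked your boundary coefficients ($2^{n+1}(n+1)!$ at $i=0$, $1$ at $i=n+1$) and your interior identity $2^{n-i+1}\frac{n!}{i!}\bigl(i+(n-i+1)\bigr)=2^{n+1-i}\frac{(n+1)!}{i!}$, and all of them close the induction correctly. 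The paper's interpolation argument is shorter and avoids reindexing bookkeeping entirely, but it pulls the evaluation points out of a hat; your inductive computation is more mechanical yet makes explicit the recursion satisfied by the coefficients $2^{n-i}n!/i!$ when one passes from $n$ to $n+1$, which is arguably more informative given that the lemma is used precisely to track how coefficients transform under the operators $\mathcal{B}_k$.
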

\begin{proof}
 Evaluating at $u=-3$, we get that both sides are equal to $n!2^n$. Evaluating at $u = -2m-3$ for $1\leq m \leq n$, we get that the left side is 0 and that the right side is 
 \[
n! 2^n\sum_{i=0}^{m}(-1)^i {m \choose i} =0.
 \] 
 Both sides are polynomials in $u$ of degree $n$ that are equal at $n+1$ values of $u$. Therefore both sides are equal as polynomials. This completes the proof.
\end{proof}




\begin{theorem}
For integer $k \geq 2$, let
\[
P_k(x) = \sum_{i=0}^{k-2} c_{i,k}\prod_{j=1}^{i} (2x-2k+2j+1).
\] 
with 
\[
c_{0,2} = 1.
\]
Then the coefficients $c_{i,k}$ satisfy 
\[
c_{i,k+1} =(\prod_{j=i+1}^{k-1}(2j+3))  \sum_{n=0}^{i}   (\prod_{j=1}^{n}(2j+1) )(\sum_{m=n}^{k-2}c_{m,k}2^{m-n} \frac{m!}{n!}) 
\]
\end{theorem}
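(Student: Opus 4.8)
The plan is to read the recursion off directly from the defining identity $P_{k+1}(x) = \mathcal{B}_k(P_k)(x)$, treating the whole computation as a change of basis. Write $b^{(k)}_m(x) = \prod_{j=1}^m (2x-2k+2j+1)$, so that $P_k = \sum_{m=0}^{k-2} c_{m,k}\, b^{(k)}_m$ and, crucially, the target coefficients $c_{i,k+1}$ are expressed in the \emph{shifted} basis $b^{(k+1)}_i(x) = \prod_{j=1}^i (2x-2(k+1)+2j+1)$. Because $\mathcal{B}_k$ is linear, it suffices to compute $\mathcal{B}_k(b^{(k)}_m)$ and then re-expand each term in the shifted basis.

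For the first step I would evaluate $\mathcal{B}_k(b^{(k)}_m)$ in closed form. Since $b^{(k)}_m(k) = \prod_{j=1}^m(2j+1)$ and $\prod_{i=1}^k(2i+1) = \big(\prod_{i=1}^m(2i+1)\big)\prod_{i=m+1}^k(2i+1)$, both terms in the numerator of $\mathcal{B}_k(b^{(k)}_m)$ share the common factor $b^{(k)}_m(x)\prod_{i=1}^m(2i+1)$; after pulling it out, what remains is a difference $\prod_{i=m+1}^k(u+a_i) - \prod_{i=m+1}^k a_i$ with $u = 2x-2k$ and $a_i = 2i+1$. This is exactly the situation of Lemma \ref{u ai}, which supplies the factor of $u = 2x-2k$ needed to cancel the denominator of $\mathcal{B}_k$. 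The result is that $\mathcal{B}_k(b^{(k)}_m)$ is an explicit combination of the $b^{(k)}_p$ with $m \le p \le k-1$, each coefficient being a product of consecutive odd integers.

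The second step is to move from the basis $b^{(k)}$ to $b^{(k+1)}$. Writing $2x-2k+2j+1 = (2x-2(k+1)) + (2j+3)$ exhibits each $b^{(k)}_p$ as $\prod_{j=1}^p(u' + 2j + 3)$ in the variable $u' = 2x-2(k+1)$, and Lemma \ref{2ni} converts this to $\sum_{i=0}^p 2^{p-i}\tfrac{p!}{i!}\,b^{(k+1)}_i$. Substituting this into the expansion from the first step and collecting the coefficient of $b^{(k+1)}_i$ yields $c_{i,k+1}$ as a sum over the two intermediate indices in terms of the $c_{m,k}$; the base case $c_{0,2}=1$ holds because $P_2 = 1 = b^{(2)}_0$.

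The main obstacle is the bookkeeping of this double change of basis. Three indices are in play at once — the expansion index $m$ of $P_k$, the intermediate index $p$ created by $\mathcal{B}_k$, and the final shifted index $i$ — and the stated closed form requires interchanging their order of summation so that the product $\prod_{j=i+1}^{k-1}(2j+3)$ is pulled out in front while the double factorials $\prod_{j=1}^n(2j+1)$ and the weights $2^{m-n}\tfrac{m!}{n!}$ separate as displayed; this is where I expect the collapsing of partial sums and the empty-product conventions to demand the most care. I would pin down every index range by checking that the recursion reproduces the explicitly listed polynomials $P_3$, $P_4$ and $P_5$ before trusting the general manipulation.
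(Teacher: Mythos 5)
Your two-lemma strategy (Lemma \ref{u ai} to expand $\mathcal{B}_k(b^{(k)}_m)$, Lemma \ref{2ni} to rebase) is the right toolkit, and your closed form for $\mathcal{B}_k(b^{(k)}_m)$ is correct: one gets $\mathcal{B}_k(b^{(k)}_m)=\bigl(\prod_{j=1}^m(2j+1)\bigr)\sum_{p=m}^{k-1}\bigl(\prod_{l=p+2}^k(2l+1)\bigr)b^{(k)}_p$. But the plan as written cannot terminate in the stated formula, for a reason your own proposed sanity check against $P_3$ would have exposed. Take $k=2$: $P_3(x)=\mathcal{B}_2(1)(x)=2x+4$. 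In the basis you adopt from the literal statement, $b^{(3)}_i(x)=\prod_{j=1}^i(2x-6+2j+1)$, this reads $2x+4=(2x-3)+7$, so $c_{0,3}=7$; the stated recursion, however, gives $c_{0,3}=\bigl(\prod_{j=1}^1(2j+3)\bigr)\cdot c_{0,2}=5$. The displayed basis in the theorem is off by a shift of $2$: the recursion is actually correct for the expansion $P_k(x)=\sum_i c_{i,k}\,f_{R(i);k-1}(x)=\sum_i c_{i,k}\prod_{j=1}^i(2x-2k+2j+3)$ (the natural basis coming from the positivity theorem, where $P_k$ is a combination of $f_{S;k-1}$), in which $P_3=(2x-1)+5$ and $P_4=10(2x-3)(2x-1)+70(2x-3)+245$, matching $c_{0,3}=5$, $c_{2,4}=10$, $c_{1,4}=70$, $c_{0,4}=245$ as the formula predicts. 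Your derivation, carried out faithfully in the literal basis, produces a different (internally consistent) recursion in which the Lemma \ref{2ni} weight appears as $2^{p-i}\,p!/i!$ linking the \emph{intermediate} index to $i$, not $2^{m-n}\,m!/n!$ linking $m$ to $n$ — so no amount of reindexing will reshape it into the claimed expression.

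The order of your two steps is also reversed relative to what produces the stated shape. The structure of the target — $2^{m-n}m!/n!$ tying the coefficient index $m$ of $P_k$ directly to the inner index $n$, and the odd products $\prod_{j=1}^n(2j+1)\cdot\prod_{j=i+1}^{k-1}(2j+3)=\prod_{j=1}^n(2j+1)\cdot\prod_{j=i+2}^{k}(2j+1)$ tying $n$ to $i$ — dictates that you first rebase $P_k=\sum_m c_{m,k}f_{R(m);k-1}$ into the basis $f_{R(n);k}$ via Lemma \ref{2ni} (this is where $\sum_{m\ge n}c_{m,k}2^{m-n}m!/n!$ comes from), and only then apply the $\mathcal{B}_k$-expansion to $f_{R(n);k}$, which lands directly in $f_{R(i);k}$ — already the correct basis for $P_{k+1}$, so no second rebasing is needed. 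Collecting the coefficient of $f_{R(i);k}$ then gives the theorem verbatim. So: correct lemmas, but you need to (i) reinterpret the basis with the $+3$ shift and (ii) swap the order of the change of basis and the application of $\mathcal{B}_k$; as it stands the final ``collect coefficients'' step would fail.
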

\begin{proof}
We have
\[
\prod_{j=1}^{i} (2x-2k+2j+1)  = f_{R(i); k}(x).
\]
Then the theorem follows directly from Lemmas \ref{B action} and \ref{2ni}.  
\end{proof}
\section{Further Work}

\begin{itemize}

\item Use these formulas or others (such as the Euler zig-zag numbers) to show that 
\[
\sum_{i=0}^n (-1)^i {n \choose i} \frac{\zeta(2k+2i)}{\pi^{i}}
\]
is positive. These expressions arise from the constants 
\[
\sum_{n=1}^\infty \frac{e^{-\pi n^2}}{(\pi n^2)^k} 
\]
after expressing the exponential using the derangement numbers. These constants arise from expansions of the Riemann xi function. 

\item Find eigenvectors of the operators $\mathcal{B}_k$.

\item Vary the sequence $R$ and see if the transforms have asymptotics or generating functions analogous to those of the Bernoulli numbers. 

\item Recover the recurrence relation and generating function for the Bernoulli numbers from these formulas.

\item See if the proofs for the Newton-Girard identities using the symmetric group can be generalized to other Weyl groups. 

\end{itemize}

\end{document}